\documentclass[a4paper]{article}
\usepackage{latexsym}
\usepackage{amssymb}
\usepackage{amsthm}
\usepackage{amsmath,amsfonts}
\usepackage{multirow}
\usepackage{doi}

\usepackage[a4paper, margin=3cm]{geometry}

\theoremstyle{plain}
\newtheorem{theorem}{Theorem}
\newtheorem{lemma}[theorem]{Lemma}

\theoremstyle{definition}
\newtheorem{definition}[theorem]{Definition}

\newtheorem{conjecture}[theorem]{Conjecture}

\theoremstyle{remark}
\newtheorem{remark}[theorem]{Remark}

\def\titlerunning#1{\gdef\titrun{#1}}
\makeatletter
\def\author#1{\gdef\autrun{\def\and{\unskip, }#1}\gdef\@author{#1}}
\def\address#1{{%
\hypersetup{pdfborder={0 0 0}}%
\def\and{\\\hspace*{18pt}}%
\renewcommand{\thefootnote}{}%
\footnote{#1}}%
\markboth{\autrun}{\titrun}}
\makeatother
\def\email#1{\hspace*{4pt}{\em e-mail}: #1}

\titlerunning{}

\title{On an infinite sequence of strongly regular digraphs with parameters $(9(2n+3), 3(2n+3), 2n+4, 2n+1, 2n+4)$}
\author{Viktor A. Byzov and Igor A. Pushkarev}
\date{March, 2026}

\begin{document}
\maketitle

\address{Viktor A. Byzov, Igor A. Pushkarev: Department of Applied Mathematics and Informatics, Vyatka State University, Kirov, Russia;
\email{vbyzov@yandex.ru, god\_sha@mail.ru}
}

\begin{abstract}
The paper constructs an infinite sequence of strongly regular directed graphs. The construction is based on representing adjacency matrices as block matrices composed of circulant blocks, together with the use of a compactification operation consistent with polynomial arithmetic modulo 
$x^{2n+3}-1$. Using computer search with the pychoco library and subsequent analysis of automorphism groups in the GAP system, a stable structural pattern was identified, which made it possible to formulate and prove an explicit formula for the adjacency matrices of the infinite sequence of directed graphs. Among the obtained digraphs, there are examples with parameters 
$(63, 21, 8, 5, 8)$ and $(81, 27, 10, 7, 10)$, for which the question of existence had previously remained open. A hypothesis on the structure of the automorphism groups of the digraphs in the constructed sequence is also formulated.
\end{abstract}

\bigskip

{\bf 2020 Mathematics Subject Classification:} 05C20.

{\bf Keywords:} directed strongly regular graph, circulant matrix, compactification of matrices, automorphism group, pychoco, GAP.

\section{Introduction}

This paper deals with directed graphs (hereafter referred to as digraphs) without loops or multiple arcs in the same direction. The adjacency matrix of such digraphs is a square matrix with rows and columns indexed by the integers from 1 to $n$, where $n$ is the number of vertices; the entry at the intersection of the $i$-th row and $j$-th column is 1 if and only if there is an arc from vertex $i$ to vertex $j$. All other entries of the adjacency matrix are zero.

We describe an infinite family of strongly regular directed graphs with parameters $(9(2n+3), 3(2n+3), 2n+4, 2n+1, 2n+4)$ for integer $n\geqslant 1$. To construct the digraphs in this sequence, we use an approach based on representing the adjacency matrix in block-circulant form. This idea, borrowed from the work~\cite{Gritsenko}, was also applied by the authors in the work~\cite{Byzov2024} to find strongly regular directed graphs with parameters $(22, 9, 6, 3, 4)$. However, since the target digraphs in the present article have significantly more vertices, several additional constraints are imposed on the adjacency matrix to narrow the search space.

The authors developed a program that, using the constraint programming library pychoco, finds adjacency matrices of $\text{dsrg}(9(2n+3), 3(2n+3), 2n+4, 2n+1, 2n+4)$ for $n=1,\ldots,5$. For the obtained digraphs, the automorphism groups were found using the GAP system (see~\cite{GAP2026}). The digraphs constructed computationally made it possible to formulate and prove a theorem on an infinite sequence of strongly regular digraphs.

The paper is organized as follows. Section~2 introduces the necessary concepts and definitions and presents auxiliary statements. Section~3 describes the method of computer-assisted search for strongly regular digraphs. Section~4 proves the theorem on an infinite sequence of strongly regular digraphs.

\section{Preliminaries}
\label{sec:preliminary}

The following notation for standard matrices will be used throughout the text. $I_v$ denotes the identity matrix of order $v$, and $J_v$ denotes the square matrix of order $v$ with all entries equal to one.

The notion of a strongly regular digraph was first introduced by A.\,M.~Duval in~\cite{Duval1988} as a natural generalization of strongly regular undirected graphs. We now give two equivalent definitions of such digraphs.

\begin{definition}
\label{def:main1}
A strongly regular digraph with parameter set $(v, k, t, \lambda, \mu)$ is a directed graph on $v$ vertices satisfying the following conditions:
\begin{enumerate}
\item The outdegree and indegree of each vertex are both equal to $k$;
\item For each vertex $x$, there are exactly $t$ paths of the form $x \to z \to x$;
\item If there is a directed edge from $x$ to $y \ne x$, then there exist exactly $\lambda$ paths of the form $x \to z \to y$;
\item If there is no directed edge from $x$ to $y \ne x$, then there exist exactly $\mu$ paths of the form $x \to z \to y$.
\end{enumerate}
\end{definition}

\begin{definition}
\label{def:main2}
A strongly regular digraph with parameter set $(v, k, t, \lambda, \mu)$ is a directed graph on $v$ vertices whose adjacency matrix $A$ satisfies the following conditions:
\begin{gather*}
A^2 = tI_v + \lambda A + \mu(J_v - I_v - A),\\
A J_v = J_v A = k J_v.
\end{gather*}
\end{definition}

Instead of the phrase ``strongly regular digraph with parameter set $(v, k, t, \lambda, \mu)$'', we will often use the following shorter notation in the text: $\text{dsrg}(v, k, t, \lambda, \mu)$.

A circulant matrix (or circulant) is defined as a square matrix in which each row, starting from the second, is obtained by a cyclic right shift of the preceding row by one position. Denote by $\text{Circ}_{\mathbb{Z}}(n)$ the ring of circulant matrices of order $n$ with entries from~$\mathbb{Z}$. It is known that the ring $\text{Circ}_{\mathbb{Z}}(n)$ is isomorphic to the quotient ring \mbox{$\mathbb{Z}[x] \big/ (x^n - 1)$} consisting of polynomials of degree less than $n$ (see, for example,~\cite{Kra2012}). Under this isomorphism, the matrix
\begin{equation}
\begin{pmatrix}
a_1     & a_2 & \dots  & a_n \\
a_n     & a_1 & \dots  & a_{n-1} \\
\vdots  & \vdots  & \ddots & \vdots \\
a_2     & a_3 & \dots  & a_1 \\
\end{pmatrix}
\end{equation}
\noindent corresponds to the polynomial $a_1 + a_2x + \ldots + a_nx^{n-1}$.

Let $M$ be a block matrix whose blocks are all circulant matrices. The compactification of the matrix $M$ is defined as the matrix $M(x)$ obtained by replacing each block of $M$ with the corresponding polynomial under the isomorphism described above. As an example, consider one of the digraphs $\text{dsrg}(8, 3, 2, 1, 1)$, whose adjacency matrix can be represented as four circulant blocks of order four:
\begin{equation}
S = \left(
\begin{array}{cccc|cccc}
0 & 0 & 0 & 1 & 0 & 1 & 1 & 0 \\
1 & 0 & 0 & 0 & 0 & 0 & 1 & 1 \\
0 & 1 & 0 & 0 & 1 & 0 & 0 & 1 \\
0 & 0 & 1 & 0 & 1 & 1 & 0 & 0 \\ \hline
0 & 0 & 1 & 1 & 0 & 1 & 0 & 0 \\
1 & 0 & 0 & 1 & 0 & 0 & 1 & 0 \\
1 & 1 & 0 & 0 & 0 & 0 & 0 & 1 \\
0 & 1 & 1 & 0 & 1 & 0 & 0 & 0
\end{array}
\right).
\end{equation}

The matrix $S$ can be compactified to the matrix
\begin{equation}
S(x) =
\begin{pmatrix}
    x^3  & x+x^2\\
    x^2+x^3 & x
\end{pmatrix}.
\end{equation}

The inverse operation, which transforms a compactified matrix into a binary matrix consisting of circulant blocks, will be called decompactification.

A key property of compactification is that this operation is compatible with matrix addition and multiplication. Specifically, let $M_1$ and $M_2$ be two square block matrices consisting of circulant blocks of order $n$, and let $M_3 = M_1 + M_2$, $M_4 = M_1 \cdot M_2$. Then $M_3(x) \equiv M_1(x) + M_2(x) \pmod{x^n-1}$, $M_4(x) \equiv M_1(x) \cdot M_2(x) \pmod{x^n-1}$, where $M_1(x)$, $M_2(x)$, $M_3(x)$, and $M_4(x)$ are the compactifications of the corresponding matrices.

We use the notation $M[i,j]$ for the entry of matrix $M$ located at the intersection of the $i$-th row and $j$-th column, where rows and columns are numbered starting from one.

\section{A computer experiment to find strongly regular digraphs}
\label{sec:program}

We seek the adjacency matrices $A_n$ of $\text{dsrg}(9(2n+3), 3(2n+3), 2n+4, 2n+1, 2n+4)$ in the form of $9 \times 9$ block matrices, where each block is a circulant matrix of order $2n+3$. From Definition~\ref{def:main2}, it follows that the matrix $A_n$ satisfies the conditions
\begin{gather}
A_n^2 + 3A_n = (2n+4)J_{9(2n+3)}, \\
A_n J_{9(2n+3)} = J_{9(2n+3)} A_n = 3(2n+3) J_{9(2n+3)}.
\end{gather}

Denote by $A_n(x)$ the compactification of the matrix $A_n$, and by $Q_n(x) = \sum\limits_{i=0}^{2n+2}x^i$ the polynomial corresponding to the all-ones circulant block. Then the matrix $A_n(x)$ satisfies the following congruences:
\begin{gather}
A_n(x)^2 + 3A_n(x) \equiv (2n+4)J_9 Q_n(x) \pmod{x^{2n+3}-1}, \label{eq:equiv1}\\
A_n(x) J_9 Q_n(x) \equiv J_9 A_n(x) Q_n(x) \equiv 3(2n+3)J_9 Q_n(x) \pmod{x^{2n+3}-1}. \label{eq:equiv2}
\end{gather}

Since $x=1$ is a root of the polynomial $x^{2n+3}-1$, substituting into congruences~\eqref{eq:equiv1} and~\eqref{eq:equiv2} yields the equalities
\begin{gather}
A_n(1)^2+3A_n(1) = (2n+3)(2n+4)J_9, \label{eq:An1_1}\\
A_n(1)J_9 = J_9 A_n(1) = 3(2n+3)J_9. \label{eq:An1_2}
\end{gather}

The following lemma describes a sequence of matrices that satisfy equations~\eqref{eq:An1_1} and~\eqref{eq:An1_2}.

\begin{lemma}
\label{lemma:Cn}
The sequence of matrices
\begin{equation}
C_n = \left(
\begin{array}{ccccccccc}
0 & n+1 & n+1 & n+1 & n+1 & 1 & 2 & n+1 & n+1 \\
0 & n+1 & n+1 & n+1 & n+1 & 1 & 2 & n+1 & n+1 \\
0 & n+1 & n+1 & n+1 & n+1 & 1 & 2 & n+1 & n+1 \\
0 & n+1 & n+1 & n+1 & n+1 & 1 & 2 & n+1 & n+1 \\
0 & n+1 & n+1 & n+1 & n+1 & 1 & 2 & n+1 & n+1 \\
0 & n+1 & n+1 & n+1 & n+1 & 1 & 2 & n+1 & n+1 \\
2n+3 & 1 & 1 & 1 & 1 & 2n+1 & 2n-1 & 1 & 1 \\
2n+3 & 1 & 1 & 1 & 1 & 2n+1 & 2n-1 & 1 & 1 \\
2n+3 & 1 & 1 & 1 & 1 & 2n+1 & 2n-1 & 1 & 1
\end{array}
\right)
\end{equation}
has the following properties:
\begin{gather}
C_n^2+3C_n = (2n+3)(2n+4)J_9, \label{eq:Cn_1}\\
C_n J_9 = J_9 C_n = 3(2n+3)J_9. \label{eq:Cn_2}
\end{gather}
\end{lemma}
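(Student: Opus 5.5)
The matrix $C_n$ has only two distinct rows, so I would write it as a sum of two rank-one matrices. Let $e=(1,\dots,1,0,0,0)^T$ be the indicator of the first six rows and $f=(0,\dots,0,1,1,1)^T$ the indicator of the last three rows. Let $a$ and $b$ be the row vectors
\[
a=(0,\,n+1,\,n+1,\,n+1,\,n+1,\,1,\,2,\,n+1,\,n+1),\qquad
b=(2n+3,\,1,\,1,\,1,\,1,\,2n+1,\,2n-1,\,1,\,1).
\]
Then $C_n=ea+fb$, and every product can be computed from the four scalars $a e$, $a f$, $b e$, $b f$ and the row sums of $a$ and $b$.

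For \eqref{eq:Cn_2}, first compute the row sums. For $a$ we get $6(n+1)+1+2=6n+9=3(2n+3)$. For $b$ we get $2n+3+6+2n+1+2n-1=6n+9$. This gives $C_nJ_9=3(2n+3)J_9$.

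For the column sums, each column $j$ sums to $6a_j+3b_j$. Check the column types one at a time:
\begin{itemize}
\item column 1: $0+3(2n+3)=6n+9$;
\item the columns with $a_j=n+1$, $b_j=1$: $6n+6+3$;
\item column 6: $6+3(2n+1)$;
\item column 7: $12+3(2n-1)$.
\end{itemize}
All equal $6n+9$, so $J_9C_n=3(2n+3)J_9$.

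For \eqref{eq:Cn_1}, expand
\[
C_n^2=(ea+fb)(ea+fb)=(ae)\,ea+(af)\,eb+(be)\,fa+(bf)\,fb .
\]
Here $ae$ is the sum of the first six entries of $a$, namely $4(n+1)+1=4n+5$. Similarly $af=2+2(n+1)=2n+4$, $be=2n+3+4+2n+1=4n+8$, and $bf=2n-1+2=2n+1$. Adding $3C_n=3ea+3fb$ gives
\[
C_n^2+3C_n=e\bigl[(4n+8)a+(2n+4)b\bigr]+f\bigl[(4n+8)a+(2n+4)b\bigr].
\]
Since $e+f$ is the all-ones column, it remains to show that $(4n+8)a+(2n+4)b=(2n+4)\cdot 2a+(2n+4)b=(2n+4)(2a+b)$ is the constant row $(2n+3)(2n+4)\mathbf 1$. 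This reduces to checking that $2a+b$ is constant with value $2n+3$. The entries of $2a+b$ are $2n+3$ in column 1, $2n+2+1$ in the generic columns, $2+2n+1$ in column 6, and $4+2n-1$ in column 7, all equal to $2n+3$.

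The only real obstacle is bookkeeping. The key observation that makes the computation painless is that $2a+b$ is constant; the rest is direct arithmetic.
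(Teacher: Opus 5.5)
Your proof is correct and complete, and it follows the paper's approach: the paper's proof only says the identities follow by direct substitution or a computer algebra system, and you carry out that verification explicitly. Your rank-two decomposition $C_n = ea + fb$ reduces it to a few scalar checks, the key one being that $2a+b$ is the constant row with entries $2n+3$.
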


The validity of this lemma can be verified by direct substitution of the formula for $C_n$ into equations~\eqref{eq:Cn_1} and~\eqref{eq:Cn_2}. The verification can be performed by direct computation or using a computer algebra system.

Thus, the sequence of matrices $C_n$ can be regarded as a ``potential candidate'' for the role of $A_n(1)$.

\begin{remark}
Note that a suitable sequence of matrices $C_n$ was obtained through nonsystematic computer experiments using the constraint programming library pychoco (see~\cite{pychoco2025}).
\end{remark}

We will search specifically for the matrices $A_n(x)$, since the adjacency matrices of the digraphs can be uniquely recovered from them. Exhaustive enumeration of all possible $A_n(x)$ is infeasible even for small $n$, so we impose the following additional constraints to reduce the search space.

\begin{enumerate}
\item $A_n(1) = C_n$.
\item $A_n(x)[1, 2] = 1+x+x^2+\ldots+x^n$.
\item $A_n(x)[7, 2] = 1$.
\item For $1\leqslant j \leqslant 9$,
\begin{equation}
A_n(x)[7, j] = A_n(x)[8, j] = A_n(x)[9, j].
\end{equation}
\item For $2 \leqslant i \leqslant 6$, $1\leqslant j \leqslant 9$,
\begin{equation}
A_n(x)[i, j] \equiv x\cdot A_n(x)[i-1, j] \pmod{x^{2n+3}-1}.
\end{equation}
\item For $1 \leqslant i \leqslant 9$,
\begin{equation}
A_n(x)[i, 3] \equiv x\cdot A_n(x)[i, 2] \pmod{x^{2n+3}-1}.
\end{equation}
\item For $1 \leqslant i \leqslant 9$,
\begin{equation}
A_n(x)[i, 8] \equiv x\cdot A_n(x)[i, 9] \pmod{x^{2n+3}-1}.
\end{equation}
\end{enumerate}

\begin{remark}
The described conditions were ``guessed'' through numerous computer experiments: with this set of conditions, the software search described below yielded positive results.
\end{remark}

A program was written to search for matrices $A_n(x)$ satisfying conditions (1)--(7). The constraint programming library pychoco was used. The program found all suitable matrices for $n=1,\ldots, 5$. For the found digraphs, the automorphism groups were computed using the GAP system. The results are presented in Table~\ref{tab:dsrg_search}. The third column of this table gives the number of nonisomorphic digraphs among those found.

\begin{table}[ht!]
\centering
\caption{Search results for strongly regular digraphs}
\label{tab:dsrg_search}
\begin{tabular}{|c|c|c|c|c|}
\hline
\textbf{$n$} & \textbf{Parameters} & \begin{tabular}{@{}c@{}} \textbf{Search} \\ \textbf{time} \\ \textbf{(sec.)} \\ \end{tabular} & \begin{tabular}{@{}c@{}} \textbf{Number of} \\ \textbf{dsrgs} \end{tabular} & \begin{tabular}{@{}c@{}} \textbf{Automorphism } \\ \textbf{groups} \end{tabular} \\ \hline
\rule{0pt}{2.6ex} \multirow{3}{*}{1} & \multirow{3}{*}{$(45,15,6,3,6)$} & \multirow{3}{*}{13.85} & 1 & $C_2 \times (C_2^4 \rtimes C_5)$ \\ \cline{4-5}
\rule{0pt}{2.6ex} & & & 3 & $C_2^2 \times (C_2^8 \rtimes C_5)$ \\ \cline{4-5}
\rule{0pt}{2.6ex} & & & 2 & $C_2 \times (C_3^5 \rtimes (C_2 \times (C_2^8 \rtimes C_5)))$ \\ \hline
\rule{0pt}{2.6ex} 2 & $(63,21,8,5,8)$ & 62.10 & 4 & $C_2 \times (C_2^6 \rtimes C_7)$ \\ \hline
\rule{0pt}{2.6ex} 3 & $(81,27,10,7,10)$ & 3388 & 2 & $C_2 \times (C_2^8 \rtimes C_9)$ \\ \hline
\rule{0pt}{2.6ex} 4 & $(99,33,12,9,12)$ & 1703 & 2 & $C_2 \times (C_2^{10} \rtimes C_{11})$ \\ \hline
\rule{0pt}{2.6ex} 5 & $(117,39,14,11,14)$ & 90354 & 2 & $C_2 \times (C_2^{12} \rtimes C_{13})$ \\ \hline
\end{tabular}
\end{table}

The search was performed on a computer with an Intel Core i5-7400 processor (3.0~GHz) and 32 GB of RAM. All found digraphs are available in the repository~\cite{Byzov2026}. Note that, at the time of writing, there was no information in Table~\cite{Brouwer2026} regarding the existence of digraphs with parameters $(63, 21, 8, 5, 8)$ and $(81, 27, 10, 7, 10)$.

\section{An infinite sequence of strongly regular digraphs}
\label{sec:sequence}

It was observed that several of the found digraphs have similar structures. This led to a conjecture whose proof yielded an infinite sequence of strongly regular digraphs.

Introduce the following notation. Let $P(x)=\sum\limits_{i=0}^{n}x^i$, $Q(x)=\sum\limits_{i=0}^{2n+2}x^i$,\\$R(x)=Q(x)-x^{n-1}-x^{2n+1}$, $S(x)=Q(x) - 1 - x^2 - x^{n+2} - x^{n+3}$.

We now prove some useful properties of these polynomials.
\begin{lemma}
\label{lemma:PQ}
The following statements hold (all congruences are modulo $x^{2n+3} - 1$):
\begin{enumerate}
\item $x^sQ(x) \equiv Q(x)$ for any integer $s\geqslant 0$;
\item $P(x)\cdot Q(x) \equiv (n+1)Q(x)$;
\item $Q(x)^2 \equiv (2n + 3)Q(x)$;
\item $(1-x)P(x) = 1-x^{n+1}$, $(1-x)Q(x)=1-x^{2n+3}$.
\end{enumerate}
\end{lemma}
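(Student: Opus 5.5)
The plan is to prove statement~4 first, since it is an exact polynomial identity and not a congruence, and then derive statements~1--3 from it. For~4 we expand $(1-x)\sum_{i=0}^{m}x^i$. The sum telescopes to $1-x^{m+1}$. Taking $m=n$ gives $(1-x)P(x)=1-x^{n+1}$, and taking $m=2n+2$ gives $(1-x)Q(x)=1-x^{2n+3}$.

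For statement~1, the second identity of~4 gives $(1-x)Q(x)\equiv 0 \pmod{x^{2n+3}-1}$, that is, $xQ(x)\equiv Q(x)$. Induction on $s$ then yields $x^sQ(x)\equiv Q(x)$ for all $s\geqslant 0$: if $x^{s}Q\equiv Q$, multiply by $x$ and use the case $s=1$. The base case $s=0$ is trivial. Alternatively, one can argue directly that multiplication by $x$ permutes the monomials $x^0,\dots,x^{2n+2}$ cyclically modulo $x^{2n+3}-1$.

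Statements~2 and~3 then follow by linearity from~1. Write $P(x)Q(x)=\sum_{i=0}^{n}x^iQ(x)$. Each of the $n+1$ summands is congruent to $Q(x)$, so $P(x)Q(x)\equiv(n+1)Q(x)$. In the same way, $Q(x)^2=\sum_{i=0}^{2n+2}x^iQ(x)$ has $2n+3$ summands, each congruent to $Q(x)$, so $Q(x)^2\equiv(2n+3)Q(x)$.

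No step is a real obstacle. The only point needing care is to note that congruences modulo $x^{2n+3}-1$ are compatible with multiplication by polynomials and with addition, so the termwise reduction in~2 and~3 is justified.
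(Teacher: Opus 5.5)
Your proposal is correct and essentially matches the paper's proof, which likewise obtains (2) and (3) from (1) via $P(x)Q(x)=\sum_{i=0}^{n}x^iQ(x)$ and $Q(x)^2=\sum_{i=0}^{2n+2}x^iQ(x)$, and checks (4) by direct computation. The only difference is that you derive (1) from the identity $(1-x)Q(x)=1-x^{2n+3}$ plus induction on $s$, whereas the paper uses the cyclic-shift argument that you list as your alternative; both are equally valid.
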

\begin{proof}
The validity of item (1) follows, for example, from the fact that multiplication by $x^s$ simply cyclically shifts the exponents, which leaves $Q(x)$ unchanged since this polynomial contains all possible exponents from 0 to $2n+2$.

The validity of items (2) and (3) follows from item (1) and the decompositions $P(x) \cdot Q(x) = \sum_{i=0}^{n} x^i Q(x)$ and $Q(x)^2 = \sum_{i=0}^{2n+2} x^i Q(x)$.

The last item of the lemma is verified by direct computation. The lemma is proved.
\end{proof}

Consider the sequence of matrices $A_n(x)$ for $n \geqslant 2$, defined by the formula
{\small\begin{equation}
\label{eq:seq_matrix}
\left(
\begin{array}{ccccccccc}
0 & P & xP & x^{n-1}P & x^{n-2}P & x^{2n} & x(1+x^n) & x^2P & xP \\
0 & xP & x^2P & x^nP & x^{n-1}P & x^{2n+1} & x^2(1+x^n) & x^3P & x^2P \\
0 & x^2P & x^3P & x^{n+1}P & x^nP & x^{2n+2} & x^3(1+x^n) & x^4P & x^3P \\
0 & x^3P & x^4P & x^{n+2}P & x^{n+1}P & 1 & x^4(1+x^n) & x^5P & x^4P \\
0 & x^4P & x^5P & x^{n+3}P & x^{n+2}P & x & x^5(1+x^n) & x^6P & x^5P \\
0 & x^5P & x^6P & x^{n+4}P & x^{n+3}P & x^2 & x^6(1+x^n) & x^7P & x^6P \\
Q & 1 & x & x^{n-1} & x^{n-2} & R & S & x^2 & x \\
Q & 1 & x & x^{n-1} & x^{n-2} & R & S & x^2 & x \\
Q & 1 & x & x^{n-1} & x^{n-2} & R & S & x^2 & x
\end{array}
\right),
\end{equation}}
\noindent where all operations are performed in the ring $\mathbb{Z}[x] \big/ (x^{2n+3} - 1)$. For brevity, $P(x)$, $Q(x)$, $R(x)$, and $S(x)$ are denoted simply by $P$, $Q$, $R$, and $S$, respectively.

\begin{theorem}
\label{th:main}
The digraph whose adjacency matrix coincides with the decompactification of matrix~\eqref{eq:seq_matrix} for $n \geqslant 2$ is $\text{dsrg}(9(2n+3), 3(2n+3), 2n+4, 2n+1, 2n+4)$.
\end{theorem}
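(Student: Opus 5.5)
By Definition~\ref{def:main2} and the compatibility of compactification with sums and products, it suffices to check three things about $A=A_n(x)$, all modulo $x^{2n+3}-1$. First, every entry of $A$ decompactifies to a $0/1$ circulant, and the diagonal entries have zero constant term, so there are no loops. Second, congruence~\eqref{eq:equiv2} holds. Third, congruence~\eqref{eq:equiv1} holds, i.e.\ $A^2+3A\equiv(2n+4)Q\,J_9$.

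The first point is immediate. Each entry is a monomial shift of $P$, of $1+x^n$, of $Q$, or of $Q$ with a few distinct monomials removed. The hypothesis $n\geqslant2$ makes $x^{n-1}$ and $x^{2n+1}$ distinct, and likewise $1,x^2,x^{n+2},x^{n+3}$, so $R$ and $S$ are $0/1$ polynomials. The diagonal entries $0$, $x^2P,\dots$ (row $i$, column $i$ for $i\leqslant 6$) and $S,x^2,x$ must have zero constant term, and I will read off the exponent ranges to confirm this. For instance $x^{n+3}P$ covers the exponents $n+3,\dots,2n+3\equiv0$, so this needs care and I will check the exact positions.

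For the second point, Lemma~\ref{lemma:PQ}(1),(2) gives $fQ\equiv f(1)Q$ for every polynomial $f$. Hence $A\,J_9Q\equiv A(1)J_9Q$ and $J_9A\,Q\equiv J_9A(1)Q$. It therefore suffices to note that $A(1)=C_n$ and to apply \eqref{eq:Cn_2} of Lemma~\ref{lemma:Cn}.

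For the third point I will exploit the structure. Write $u=(1,x,\dots,x^5)^{T}$. The top $6\times9$ block equals $u\cdot a$, where $a$ is row~1, and the bottom three rows are all equal to $b$, row~7. Thus $A=u a+w b$ with $w=(0,\dots,0,1,1,1)^T$. Then
\[
A^2 = u(a\cdot u)a + u(a\cdot w)b + w(b\cdot u)a + w(b\cdot w)b,
\]
so the whole computation reduces to the four scalar polynomials $a\cdot u$, $a\cdot w$, $b\cdot u$, $b\cdot w$. The target is $(2n+4)Q$ times the all-ones matrix minus $3A$. This amounts to four scalar congruences, one for each of the row types (top rows, scaled by $x^{i-1}$, and bottom rows) against the column coefficients of $a$ and $b$. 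Concretely I need to show
\[
(a\cdot u)\,a+(a\cdot w)\,b\equiv (2n+4)Q\,\mathbf 1-3a,\qquad (b\cdot u)\,a+(b\cdot w)\,b\equiv (2n+4)Q\,\mathbf 1-3b
\]
entrywise, where $\mathbf 1$ is the all-ones row. The column with first entry $0$ in $a$ and $Q$ in $b$ immediately forces $a\cdot w\,Q\equiv (2n+4)Q$ and $b\cdot w\,Q\equiv(2n+1)Q$.

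The main obstacle is computing $a\cdot u=\sum_{j=1}^6 a_jx^{j-1}$ and $b\cdot u$ in closed form and then verifying the nine column identities. For this I will use Lemma~\ref{lemma:PQ}(4): multiply through by $(1-x)$ to turn each $P$ into $1-x^{n+1}$, expand into sparse Laurent-type sums of monomials, and reduce the exponents modulo $2n+3$. I expect $a\cdot u$ to collapse to something like $-3+cQ$ and $b\cdot u$ to a multiple of $Q$ plus a small correction, so that the cross terms cancel the $-3a$ and $-3b$ pieces. Multiplying by $(1-x)$ loses information along $Q$, since $(1-x)Q\equiv 0$. I will recover the $Q$-component separately by evaluating at $x=1$, where the identities reduce to Lemma~\ref{lemma:Cn}. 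Because $\mathbb{Z}[x]/(x^{2n+3}-1)$ has $\ker(1-x)=\mathbb{Z}Q$, these two checks together suffice.
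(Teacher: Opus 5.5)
Your plan is correct and is essentially the paper's proof: the factorization $A=ua+wb$ is an algebraic packaging of the paper's reduction to rows $1$ and $7$ of $W=A^2+3A$, and certifying each entry $F$ by $(1-x)F\equiv 0$ together with $F(1)=(2n+3)(2n+4)$ (via $\ker(1-x)=\mathbb{Z}Q$) is the same device the paper uses for $W[1,2]$; the paper also saves work by noting that columns $3,4,5,8,9$ of $A$ are $x$, $x^{n-1}$, $x^{n-2}$, $x^2$, $x$ times column $2$. Your explicit $0/1$ and no-loop check covers a point the paper leaves tacit and does go through, since the diagonal blocks are $0,xP,x^3P,x^{n+2}P,x^{n+2}P,x^2,S,x^2,x$ and so $x^{n+3}P$ never lies on the diagonal; your guess for $a\cdot u$ is off (in fact $a\cdot u\equiv 2Q-2-x+x^{n+2}+x^{n+3}$), but nothing in the method depends on it.
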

\begin{proof}
To prove the theorem, it suffices to show that the sequence of matrices $A_n(x)$ for $n \geqslant 2$ satisfies the regularity condition and congruence~\eqref{eq:equiv1}. The fact that $A_n(x)$ satisfies the regularity condition follows from Lemma~\ref{lemma:Cn} and $A_n(1) = C_n$.

We now prove that $A_n(x)^2 + 3A_n(x) \equiv (2n+4)J_9 Q(x)$. Denote $A_n(x)^2 + 3A_n(x)$ by~$W(x)$. Since in the first six rows of $A_n(x)$, each row starting from the second is obtained by multiplying the previous row by $x$, it suffices to compute only the first row of $W(x)$. Similarly, due to the equality of the last three rows of $A_n(x)$, we need to compute only the seventh row instead of the last three rows of $W(x)$. In the computations, we will use the properties from Lemma~\ref{lemma:PQ}.
\begin{multline}
W(x)[1, 1] = (x^{2}P(x) + x P(x) + x^{n + 1} + x) Q(x) \equiv\\\equiv (n+1)Q(x)+(n+1)Q(x)+2Q(x) = (2n+4) Q(x).
\end{multline}

We now proceed to compute the next entry of the matrix $W(x)$:
\begin{equation}
W(x)[1, 2] = (2x^{n+2} + x^3 + x)P(x)^2+(x^{2n+5}+x^2+x+3)P(x)+x^{n+1}+x.
\end{equation}

Using Lemma~\ref{lemma:PQ} and the relation $x^{2n+3} \equiv 1$, it can be shown that
\begin{equation}
(1-x)W(x)[1, 2] \equiv 0 \pmod{x^{2n+3}-1}.
\end{equation}

Hence, there exists a polynomial $T(x)$ in the ring $\mathbb{Z}[x]$ such that
\begin{equation}
\label{eq:Wx}
(1-x)W(x)[1, 2] = T(x)(1-x^{2n+3}) = T(x)(1-x)Q(x).
\end{equation}

Since $\mathbb{Z}[x]$ is an integral domain and $1 - x \neq 0$, equation~\eqref{eq:Wx} implies that
\begin{equation}
W(x)[1, 2] = T(x)Q(x).
\end{equation}

Since $\deg W(x)[1, 2] \leqslant 2n+2$ and $\deg Q(x) = 2n+2$, the polynomial $T(x)$ must be a constant. As $W(1)[1, 2] = 2(2n+3)(n+2)$ and $Q(1) = 2n+3$, it follows that $T(x) = 2n+4$. Thus, $W(x)[1, 2] = (2n+4) Q(x)$.

Furthermore, $W(x)[1, 3] = x\cdot W(x)[1, 2] \equiv (2n+4)Q(x)$.

$W(x)[1, 4] = x^{n-1}\cdot W(x)[1, 2] \equiv (2n+4)Q(x)$.

$W(x)[1, 5] = x^{n-2}\cdot W(x)[1, 2] \equiv (2n+4)Q(x)$.

After simplification, we obtain
\begin{multline}
W(x)[1, 6] \equiv (2n+4)Q(x)+(1-x)P(x)(x^{2n+1}+x^n+2x^{n-1})+\\+2x^{2n}-x^n-x^{n-1}.
\end{multline}

Item (4) of Lemma~\ref{lemma:PQ} allows us to simplify: $W(x)[1, 6] \equiv (2n+4) Q(x)$.

Using the expression for $S(x)$, we obtain
\begin{multline}
W(x)[1, 7] \equiv\\\equiv (2n+4)Q(x)+(1-x)P(x)(x^{n+4}+2x^{n+3}+x^{n+2}+x^2+x+2)-\\-x^{n+4}-x^{n+3}+2x^{n+1}+x-1.
\end{multline}

Item (4) of Lemma~\ref{lemma:PQ} allows the simplification: $W(x)[1, 7] \equiv (2n+4) Q(x)$.

Furthermore, $W(x)[1, 8] = x^2\cdot W(x)[1, 2] \equiv (2n+4)Q(x)$.

$W(x)[1, 9] = x\cdot W(x)[1, 2] \equiv (2n+4)Q(x)$.
\begin{multline}
W(x)[7, 1] = (S(x)+x^2+x+3)Q(x) =\\= (Q(x)-1-x^{n+2}-x^{n+3}+x+3)Q(x)\equiv (2n+4)Q(x).
\end{multline}

Using the statements of Lemma~\ref{lemma:PQ}, we obtain
\begin{equation}
W(x)[7, 2] \equiv (2n + 3) Q(x) + x^{n+2}P(x)+xP(x)+1.
\end{equation}

From the definitions of $P(x)$ and $Q(x)$, it follows that $x^{n+2} P(x) + x P(x) + 1 = Q(x)$, so $W(x)[7, 2] \equiv (2n + 4) Q(x)$.

Furthermore, $W(x)[7, 3] = x\cdot W(x)[7, 2] \equiv (2n+4)Q(x)$.

$W(x)[7, 4] = x^{n-1}\cdot W(x)[7, 2] \equiv (2n+4)Q(x)$.

$W(x)[7, 5] = x^{n-2}\cdot W(x)[7, 2] \equiv (2n+4)Q(x)$.

Using items (1) and (3) of Lemma~\ref{lemma:PQ} and a sequence of transformations, it is proved that $W(x)[7, 6] \equiv W(x)[7, 7] \equiv (2n+4) Q(x)$.

Furthermore, $W(x)[7, 8] = x^2\cdot W(x)[7, 2] \equiv (2n+4)Q(x)$.

$W(x)[7, 9] = x\cdot W(x)[7, 2] \equiv (2n+4)Q(x)$.

Thus, we have shown that all entries in the first and seventh rows of the matrix $W(x)$ equal $(2n+4) Q(x)$. Consequently, all other entries of this matrix also equal $(2n+4) Q(x)$. The theorem is proved.
\end{proof}

The authors developed a program to generate strongly regular digraphs based on the formula from Theorem~\ref{th:main}. For the generated digraphs, the automorphism groups were computed using GAP, which led to the following conjecture.

\begin{conjecture}
For the constructed sequence of strongly regular digraphs with parameters $(9(2n+3), 3(2n+3), 2n+4, 2n+1, 2n+4)$, the automorphism group is $C_2 \times (C_2^{2n+2} \rtimes C_{2n+3})$.
\end{conjecture}

\section{Conclusion}
\label{sec:conclusion}
In this paper, an explicit formula is obtained for a family of strongly regular digraphs with parameters $(9(2n+3), 3(2n+3), 2n+4, 2n+1, 2n+4)$. The adjacency matrix of each digraph consists of 81 circulant submatrices. A conjecture is formulated that the automorphism groups of the obtained digraphs are $C_2 \times (C_2^{2n+2} \rtimes C_{2n+3})$.


\begin{thebibliography}{99}

\bibitem{Gritsenko}
O.~Gritsenko On strongly regular graph with parameters (65; 32; 15; 16). 2021. 10 p. (Cornell Univ. Libr. e-Print Archive;
arXiv:2102.05432). \doi{10.48550/arXiv.2102.05432}.

\bibitem{Byzov2024}
V.\,A.~Byzov and I.\,A.~Pushkarev,
On the existence of directed strongly regular graphs with parameters
(22, 9, 6, 3, 4). \emph{Prikl. Diskr. Mat.}, 66: 86--96, 2024. \doi{10.17223/20710410/66/8}.

\bibitem{GAP2026}
The GAP~Group,
GAP -- Groups, Algorithms, and Programming, Version 4.15.1.
\url{https://www.gap-system.org}, 2026.

\bibitem{Duval1988}
A.\,M.~Duval,
A directed graph version of strongly regular graphs.
\emph{J. Combin. Theory Ser.~A}, 47(1): 71--100, 1988.
\doi{10.1016/0097-3165(88)90043-X}.

\bibitem{Kra2012}
I.~Kra and S.\,R.~Simanca,
On circulant matrices. \emph{Notices Amer. Math. Soc.}, 59(3): 368--377, 2012.
\doi{10.1090/noti804}.

\bibitem{pychoco2025}
D.~Justeau-Allaire and C.~Prud’homme,
pychoco: all-inclusive Python bindings for the Choco-solver constraint
programming library.
\emph{J. Open Source Softw.}, 10(113): 8847, 2025. \doi{10.21105/joss.08847}.

\bibitem{Byzov2026}
V.\,A.~Byzov,
Search results for directed strongly regular graphs with parameters
$(9(2n+3), 3(2n+3), 2n+4, 2n+1, 2n+4)$.
\url{https://github.com/byzovv/dsrg_search}, 2026.

\bibitem{Brouwer2026}
A.\,E.~Brouwer and S.\,A.~Hobart,
Parameters of directed strongly regular graphs. \url{https://homepages.cwi.nl/~aeb/math/dsrg/dsrg.html}, 2026.

\end{thebibliography}
\end{document}